\newtheorem{theorem}{Theorem}
\newtheorem{lemma}{Lemma}
\newtheorem{conjecture}{Conjecture}
\newtheorem{proposition}{Proposition}
\newtheoremstyle{rem}{0.4cm}{0.4cm}{}{}{\itshape}{.}{.5em}{}
\theoremstyle{rem}
\newtheorem{remark}{Remark}
\newcommand{\Z}{\mathbb{Z}}
\newcommand{\Q}{\mathbb{Q}}
\newcommand{\R}{\mathbb{R}}
\newcommand{\C}{\mathbb{C}}
\newcommand{\T}{\mathbb{T}}
\newcommand{\Lc}{\mathcal{L}}
\newcommand{\tr}{\operatorname{tr}}
\newcommand{\Gal}{\operatorname{Gal}}
\newcommand{\ord}{\operatorname{ord}}
\newcommand{\id}{\mathrm{id}}
\newcommand{\new}{\text{new}}
\definecolor{light-gray}{gray}{0.5}
\author{Sandro Bettin}
\address{
  Dipartimento di Matematica, Universit{\`a} di Genova, via Dodecaneso 35, 16146
  Genova, Italy
}
\email{bettin@dima.unige.it}
\author{Corentin Perret-Gentil}
\address{
  Centre de recherches math{\'e}matiques, Universit{\'e} de Montr{\'e}al, Montr{\'e}al, Canada}
\email{corentin.perretgentil@gmail.com}
\author{Maksym Radziwi\l\l}
\address{
  Caltech, Department of Mathematics, 1200 E California Blvd, Pasadena, CA,
  91125, USA
}
\email{maksym.radziwill@gmail.com}
\title{A note on the dimension of the largest simple Hecke submodule}
\begin{document}
\begin{abstract}
  For $k\ge 2$ even, let $d_{k,N}$ denote the dimension of the largest simple Hecke submodule of $S_{k}(\Gamma_0(N); \Q)^\new$. We show, using a simple analytic method, that $d_{k,N} \gg_k \log\log N / \log(2p)$ with $p$ the smallest prime co-prime to $N$. Previously, bounds of this quality were only known for $N$ in certain subsets of the primes. We also establish similar (and sometimes stronger) results concerning $S_{k}(\Gamma_0(N), \chi)$, with $k \geq 2$ an integer and $\chi$ an arbitrary nebentypus. 
\end{abstract}

\maketitle

\section{Introduction}

For an integral weight $k\ge 2$ and a level $N\ge 1$, the anemic Hecke $\Q$-algebra
\[\T:=\Q[T_n : (n,N)=1],\]
generated by the Hecke operators $T_{n}$, acts on the space of cusp forms $S_k(\Gamma_0(N))$.

Simple Hecke submodules of $S_k(\Gamma_0(N))$ of dimension $d$ correspond to $\Gal(\overline\Q/\Q)$-orbits of size $d$ of (arithmetically) normalized eigenforms $f\in S_k(\Gamma_0(N))$. When $k=2$, the work of Shimura also gives a correspondence with simple factors of dimension $d$ of the Jacobian $J_0(N)$ of the modular curve $X_{0}(N)$.
Thus it is interesting to ask about the dimension $d_{k,N}$ of the largest simple Hecke submodule of $S_{k}(\Gamma_0(N))$, or equivalently the maximal degree of Hecke fields of normalized eigenforms.

Maeda \cite{HidMae97} postulated that $S_{k}(\Gamma_0(1))$ is a simple Hecke module for all even $k \geq 12$. This deep conjecture implies among other things that $L(\tfrac 12, f) \neq 0$ for all $f \in S_{k}(\Gamma_0(1))$, see \cite{ConreyFarmer}. When $N>1$, there is an obstruction to simplicity due to the Atkin--Lehner involutions, but numerical evidence suggests that this is the only asymptotic barrier when $N$ is square-free. This led Tsaknias \cite{Tsa14} to suggest the following generalization of Maeda's conjecture (see also \cite{DieuTsa16} for non-square-free levels):
\begin{conjecture}\label{conj}
  For $k\ge 2$ even and large enough and $N$ square-free, the number of Galois orbits of newforms in $S_k(\Gamma_0(N))$ is $2^{\omega(N)}$. In particular, for any fixed $\varepsilon > 0$ we have
  \[d_{k,N}\gg_{k,\varepsilon} N^{1-\varepsilon}.\]
That is, there exists a constant $c(k, \varepsilon) > 0$ depending at most on $k$ and $\varepsilon$ such that $d_{k, N} > c(k, \varepsilon) N^{1 - \varepsilon}$ for all square-free $N \geq 1$. 
\end{conjecture}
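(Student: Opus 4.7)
The plan is to attack the primary claim (that the number of Galois orbits equals $2^{\omega(N)}$) directly and then deduce the dimension bound by pigeonhole. First I would recall that, for $N$ square-free, the Atkin--Lehner involutions $W_Q$ for $Q\mid N$ commute with the anemic Hecke algebra $\T$ and generate a group isomorphic to $(\Z/2\Z)^{\omega(N)}$ acting on $S_k(\Gamma_0(N))^{\new}$. Their joint eigenspace decomposition gives
\[S_k(\Gamma_0(N))^{\new}=\bigoplus_{\epsilon\in\{\pm 1\}^{\omega(N)}}S_k^\epsilon,\]
and every Galois orbit of newforms lies entirely in one such $S_k^\epsilon$; so there are automatically at least $2^{\omega(N)}$ orbits (each summand being non-zero for $k$ sufficiently large by the usual dimension formulas). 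The conjecture asserts equality, so the task is to show that each $S_k^\epsilon$ is a single $\Gal(\overline\Q/\Q)$-orbit, i.e.\ that $\T$ acts on $S_k^\epsilon$ through a single number field of degree $\dim S_k^\epsilon$.

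Second, I would try to reduce this to the irreducibility of a single Hecke polynomial. It suffices to exhibit, for each $\epsilon$, a prime $\ell\nmid N$ such that the characteristic polynomial $P_{\ell,\epsilon}(X)\in\Z[X]$ of $T_\ell$ restricted to $S_k^\epsilon$ is irreducible over $\Q$: the Galois group would then permute its roots transitively, forcing the normalized eigenforms in $S_k^\epsilon$ into a single orbit. The natural tools are vertical equidistribution of Hecke eigenvalues (Serre's trace formula argument, vertical Sato--Tate) to control the statistics of $\{a_\ell(f):f\in S_k^\epsilon\}$ as $\ell$ and $N$ vary, combined with $\ell$-adic input designed to rule out non-trivial rational factorisations of $P_{\ell,\epsilon}$. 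A concrete avenue is to work through the residual Galois representations $\bar\rho_{f,\ell}$: any non-trivial factorisation $P_{\ell,\epsilon}=AB$ over $\Q$ produces congruences $a_p(f)\equiv a_p(f')\pmod{\mathfrak{l}}$ between eigenforms in distinct putative orbits, and such congruences are severely restricted by level-raising/lowering (Ribet) together with the expected largeness of the image of $\bar\rho_{f,\ell}$.

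The principal obstacle is that this last step is essentially the content of the conjecture itself; no general mechanism is presently known for forcing Hecke polynomials to be irreducible, and uniform big-image statements for residual representations across a Galois orbit are out of reach even when $\omega(N)=0$, the case that recovers Maeda's original conjecture. The best one can hope for with current technology is to combine analytic upper bounds on the count of newforms with small Hecke field (as in the present note) with arithmetic input on congruence primes, but converting these ingredients into an unconditional $N^{1-\varepsilon}$ lower bound appears to require a genuinely new idea, and I expect the full conjecture to remain out of reach of the methods used here. Granting the orbit count, the dimension bound is routine: since $\dim S_k(\Gamma_0(N))^{\new}\asymp_k\varphi(N)\gg_\varepsilon N^{1-\varepsilon}$ and $2^{\omega(N)}=N^{o(1)}$, pigeonhole immediately yields $d_{k,N}\gg_{k,\varepsilon}N^{1-\varepsilon}$.
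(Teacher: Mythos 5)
There is a genuine gap, and it is unavoidable: the statement you were asked about is Conjecture \ref{conj}, Tsaknias's generalization of Maeda's conjecture, and the paper does not prove it (nor does anyone); its unconditional results, Theorems \ref{thm1}--\ref{thm2}, give only lower bounds of order $\log\log N$, vastly weaker than $N^{1-\varepsilon}$. Your proposal correctly identifies the natural framework --- the Atkin--Lehner eigenspace decomposition of $S_k(\Gamma_0(N))^{\new}$, the observation that each $\Gal(\overline\Q/\Q)$-orbit sits inside a single eigenspace $S_k^\epsilon$ (since the Atkin--Lehner eigenvalues are $\pm 1$, hence Galois-stable), the reduction of ``one orbit per eigenspace'' to the $\Q$-irreducibility of the characteristic polynomial of some $T_\ell$ on $S_k^\epsilon$, and the routine pigeonhole deduction of $d_{k,N}\gg_{k,\varepsilon}N^{1-\varepsilon}$ from the orbit count together with $\dim S_k(\Gamma_0(N))^{\new}\gg_k N^{1-o(1)}$ and $2^{\omega(N)}=N^{o(1)}$. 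But, as you yourself concede, the pivotal step --- forcing irreducibility of these Hecke polynomials, or equivalently ruling out all nontrivial rational factorizations via congruences and big-image statements for residual representations --- is precisely the content of the conjecture, and no known technique (vertical Sato--Tate equidistribution, level raising/lowering, etc.) delivers it even in the level-one case of Maeda. So what you have written is an honest strategy sketch plus a correct conditional deduction of the second assertion from the first, not a proof; it should be presented as such, and it is consistent with the paper, which states the result only as a conjecture and proves the much weaker bounds by an entirely different (analytic, small-eigenvalue) argument.
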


There is a massive gap between Conjecture \ref{conj} and the unconditional results. Through an equidistribution theorem for Hecke eigenvalues, Serre \cite{Ser97} was the first to establish that $d_{k,N} \rightarrow \infty$ as $k+N \rightarrow \infty$. Subsequently, by making Serre's equidistribution theorem effective, Royer \cite{Roy00} and Murty--Sinha \cite{MurtSinh09} showed that $d_{k,N} \gg_{k,p} \sqrt{\log\log N}$ for any $p\nmid N$. In the particular case where $N$ lies in a restricted set of primes, this bound has been improved by several authors. Extending a method of Mazur to all even weights, Billerey and Menares \cite[Theorem 2]{BilMen16} obtained that $d_{k,N}\gg_k \log{N}$ when $N\ge (k+1)^4$ is in a explicit set primes of lower natural density $\ge 3/4$. When the lower bound is fixed in advance and one looks for a level with a given number of prime divisors attaining it, see also \cite{DieuJimRib15}. When $N \equiv 7 \pmod{8}$ is prime, Lipnowski--Schaeffer \cite[Corollary 1.7]{LipnSchaeff18} also showed that $d_{2,N} \gg \log\log N$, which can be significantly improved for $N$ in certain subsets of the primes under certain well-known conjectures and heuristics.\\

In this paper we show that bounds of Lipnowski--Schaeffer quality can be obtained for all levels and integer weights. Our method is however, analytic and we believe simpler than the one in \cite{LipnSchaeff18}.  

\begin{theorem} \label{thm1}
  Let $k\ge 2$ even and $N\ge 1$ be integers. Then the dimension of the largest simple Hecke submodule of $S_{k}(\Gamma_0(N))^{\new}$ is
  \[d_{k,N} \gg_{k}\frac{\log\log{N}}{\log(2p_N)},\]
  as $N \rightarrow \infty$, where $p_N$ denotes the smallest prime co-prime to $N$.
\end{theorem}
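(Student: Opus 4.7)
The plan is to combine an arithmetic lower bound on nonzero Hecke eigenvalues $a_f(p)$ (coming from Deligne together with the Galois-closedness of orbits) with an analytic upper bound on how many newforms have normalized eigenvalue $\lambda_f(p) := a_f(p)/p^{(k-1)/2}$ close to zero (from the Petersson trace formula). Write $d := d_{k,N}$, $p := p_N$, and $D := \dim S_k(\Gamma_0(N))^{\new} \asymp_k N$. Every newform $f$ lies in a Galois orbit of size $\le d$, so $a_f(p)$ is an algebraic integer of degree $\le d$ whose conjugates all lie in $[-2p^{(k-1)/2}, 2p^{(k-1)/2}]$ by Deligne; since $|\operatorname{Nm}(a_f(p))|\ge 1$ when $a_f(p)\ne 0$, dividing by $\prod_{\sigma\ne \id}|\sigma(a_f(p))|$ gives the dichotomy
\[
\lambda_f(p) = 0 \qquad \text{or} \qquad |\lambda_f(p)| \ge \delta := 2\bigl(2p^{(k-1)/2}\bigr)^{-d}.
\]

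For the analytic step, applying the Petersson formula to the Chebyshev polynomials $U_j(\lambda_f(p)/2) = p^{-j(k-1)/2}a_f(p^j)$ for $j \le M \asymp \log N/\log(2p)$ and controlling the off-diagonal Kloosterman sums via Weil's bound, one obtains effective equidistribution of the multiset $\{\lambda_f(p)\}$ as $f$ ranges over a basis of the newspace with respect to the $p$-adic Plancherel measure $\mu_p$, with discrepancy $O_k(\log(2p)/\log N)$ (matching the threshold where the truncated Chebyshev expansion begins to lose to the Kloosterman error). Since $\mu_p$ has continuous bounded density on $[-2,2]$, this yields for every $\eta \in (0,1)$
\[
\#\{f : |\lambda_f(p)| < \eta\} = \Theta_k(D\eta) + O_k\!\left(D \cdot \tfrac{\log(2p)}{\log N}\right).
\]

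Combining, the arithmetic gap gives $\{|\lambda_f(p)|<\delta\} = \{a_f(p)=0\}$, so the analytic estimate applied at scale $\eta = \delta$ (from below, using the positive density of $\mu_p$ at the origin) and at scale $\eta = 1/\log N$ (from above, covering the zero set) produces the pair of bounds
\[
c_k D\delta - O_k\!\left(D\tfrac{\log(2p)}{\log N}\right) \;\le\; \#\{f : a_f(p)=0\} \;\le\; C_k\,D\tfrac{\log(2p)}{\log N}.
\]
This forces $\delta \ll_k \log(2p)/\log N$, i.e.\ $(2p^{(k-1)/2})^d \gg_k \log N/\log(2p)$; since $p_N \le \log N$ by a Chebyshev–Mertens computation (so that $\log\log(2p_N) = o(\log\log N)$), taking logarithms yields $d \gg_k \log\log N/\log(2p)$, as required.

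The main obstacle is the analytic step: one needs effective equidistribution with discrepancy $O_k(\log(2p)/\log N)$ for the \emph{unweighted} measure on the new subspace, not just for the harmonic Petersson-weighted measure. This requires stripping the Petersson weights (via bounds on $L(1,\operatorname{Sym}^2 f)$ or an approximate functional equation) and isolating newforms by Möbius inversion over divisors of $N$, all while keeping the dependence on $N$ and $p$ explicit and uniform — the ingredient that sharpens the Royer–Murty–Sinha $\sqrt{\log\log N}$ bound to the claimed $\log\log N/\log(2p)$.
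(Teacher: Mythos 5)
Your argument shares the paper's skeleton. The Diophantine dichotomy---nonzero $a_f(p)$ of degree $\le d$ forces $|\lambda_f(p)| \ge 2(2p^{(k-1)/2})^{-d}$, via Deligne's bound and the integrality of the norm---is exactly Proposition~\ref{lem1} stated in contrapositive form, and the analytic input is effective Sato--Tate equidistribution for the $\lambda_f(p)$ with discrepancy $O_k(\log(2p)/\log N)$. Your two-scale sandwich is a valid reorganization of the paper's Lemma~\ref{lem2}, which instead directly exhibits one newform with $0<|a_f'(p)|\ll_k\log p/\log N$; the two routes force the same bound, with the same logarithm-of-logarithm count at the end.

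Two differences deserve flagging. First, you propose the Petersson trace formula, which yields the \emph{harmonically weighted} distribution; to pass to the unweighted count in your sandwich you must strip the weights $\asymp (N\, L(1,\mathrm{Sym}^2 f))^{-1}$, and the best known pointwise lower bound $L(1,\mathrm{Sym}^2 f) \gg (\log N)^{-1}$ costs exactly a $\log N$ factor---which you cannot afford at scale $\eta\asymp 1/\log N$. (One can sidestep this by running your sandwich on \emph{weighted} counts throughout, since you only need positivity of $\sum_{0<|\lambda_f(p)|\le\eta}\omega_f$, but your write-up takes the unweighted count as the target and names weight-stripping the main obstacle.) The paper avoids this entirely: it uses the Eichler--Selberg trace formula via Murty--Sinha, whose $d^*$-convolution gives the unweighted newform counting measure directly, so no weight removal is needed. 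Second, your closing remark locates the improvement over Royer/Murty--Sinha's $\sqrt{\log\log N}$ in a sharper discrepancy or a more uniform version of the equidistribution. That is a misdiagnosis: the discrepancy $O(\log p/\log N)$ is the same in those earlier works. The gain comes purely from the Diophantine step: Royer and Murty--Sinha probed the neighborhoods of \emph{all} algebraic integers of bounded degree and height, introducing a $d^2$ in the exponent and hence the square root, whereas the present argument (yours and the paper's) needs only a single small nonzero eigenvalue to force the degree to be large.
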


Since the vast majority of integers $N$ have a small co-prime factor, this bound is essentially asserting that $d_{k,N} \gg_{k} \log\log N$.  Theorem \ref{thm1} appears to be the first bound of ``$\log\log N$ strength'' for any even weight $k \geq 4$, and in the case $k = 2$, without restriction on the level.\\

We state below a more general and precise form of Theorem \ref{thm1} that holds in the presence of a nebentypus.
\begin{theorem} \label{thm2}
  Let $k \geq 2$ and $N \geq 1$ be integers. Let $p\nmid N$ and let $\chi: (\Z/N)^\times\to\C^\times$ be a homomorphism such that $\chi(-1)=(-1)^k$. Then the maximum size of the $\Gal(\overline\Q/\Q)$-orbits of newforms $f\in S_k(\Gamma_0(N),\chi)$ is 
  \[\ge \frac{2}{(k-1)\log(4p)}\cdot \log \left( \frac{\log N}{2\pi \log p} \right)\]
  for all sufficiently large $N$ (in terms of $k$).
\end{theorem}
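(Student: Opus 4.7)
The plan is to assume that the maximum Galois orbit of newforms in $S_k(\Gamma_0(N),\chi)^{\new}$ has size at most $d$ and derive the equivalent inequality $(4p)^{(k-1)d/2}\geq (\log N)/(2\pi\log p)$ for $N$ large compared to $k$.

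The algebraic ingredient is Deligne's bound: for any prime $p\nmid N$, each Hecke eigenvalue $a_f(p)$ is an algebraic integer whose conjugates under $\Gal(\overline\Q/\Q)$ all have absolute value at most $2p^{(k-1)/2}$. Since the Galois orbit of $f$ has size at most $d$, the minimal polynomial of $a_f(p)$ over $\Q$ has degree at most $d$; hence Galois-invariant integers attached to $a_f(p)$ are severely constrained. For instance the norm of $a_f(p)$ relative to $\Q$ is a rational integer of absolute value at most $(2p^{(k-1)/2})^d\leq(4p)^{(k-1)d/2}$, so it takes at most roughly $(4p)^{(k-1)d/2}$ distinct values as $f$ varies.

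The analytic ingredient should produce a lower bound of order $(\log N)/(2\pi\log p)$ on the number of distinct Hecke eigenvalues at $p$ (or of the chosen Galois-invariant attached to them) among newforms. The factor $(2\pi\log p)^{-1}$ is strongly suggestive of a Riemann--von Mangoldt zero count: a degree-$2$ automorphic $L$-function of conductor $N$ has approximately $(\log N)/(2\pi\log p)$ nontrivial zeros with $|\gamma|\leq 1/\log p$. I therefore expect an explicit-formula argument applied to a suitable Galois-orbit $L$-function $\prod_\sigma L(s, f^\sigma)$, with a test function Fourier-localised at scale $\log p$ so that the prime-power side of the explicit formula collapses to the $p^m$-contributions only; summing over newforms or over Galois orbits would then give the needed lower bound. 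An alternative realisation uses moments of $a_f(p)$ computed via the Eichler--Selberg trace formula for $T_{p^m}$, where a large number of distinct eigenvalues is forced as a spectral-consistency condition between the geometric and spectral sides.

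Combining the two ingredients yields $(4p)^{(k-1)d/2}\geq(\log N)/(2\pi\log p)$, equivalent to the stated bound. The main obstacle I foresee is the analytic step: proving the lower bound with the exact constant $(2\pi)^{-1}$ and the sharp localisation scale $1/\log p$ will require a careful choice of test function and precise bookkeeping of the archimedean and prime-power contributions in whichever explicit or trace formula is invoked, and also a careful treatment of the nebentypus in the $p$-local factors (to ensure that coefficients of the relevant Galois-invariant characteristic polynomials remain rational integers even when $\chi$ is nontrivial).
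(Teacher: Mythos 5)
Your proposal takes a genuinely different route from the paper, and unfortunately the route has a structural gap that prevents it from reaching the stated bound.

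The paper's key idea (Proposition \ref{lem1}) is not a counting argument at all: it observes that if a \emph{single} newform $f$ has $a_f(p)\neq 0$ with $|a_f(p)|$ abnormally small, then the degree of $\Q(a_f(p))$ must be large. This is because the norm $\prod_{i}|a_{f,i}(p)|\geq 1$ (it is a nonzero rational integer), while every conjugate satisfies $|a_{f,i}(p)|\leq 2p^{(k-1)/2}$ by Deligne; hence $|a_f(p)|\cdot(2p^{(k-1)/2})^{g-1}\geq 1$ forces $g\gg \log(1/|a_f'(p)|)/\log(4p)$. The analytic step (Lemma \ref{lem2}) then exhibits \emph{one} newform with $0<|a_f'(p)|\ll \log p/\log N$, using the Eichler--Selberg trace formula and the Erd\H{o}s--Tur\'an inequality in the effective form of Murty--Sinha. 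The mysterious $(2\pi)^{-1}$ is not a zero-counting constant: it comes from the Sato--Tate density $\tfrac{2}{\pi}\sqrt{1-x^2}$ near the origin combined with the range $M\lesssim \log N/\log p$ in Erd\H{o}s--Tur\'an, with some slack. No $L$-functions or explicit formulas appear.

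Your proposal instead tries to lower-bound the number of \emph{distinct} Galois-invariants (norms) attached to $a_f(p)$ and compare with the count of admissible values. This has two problems. First, many distinct eigenvalues can share a norm, so a lower bound on distinct $a_f(p)$ does not translate into a lower bound on distinct norms; you would need to count distinct degree-$\leq d$ algebraic integers in $[-2p^{(k-1)/2},2p^{(k-1)/2}]$, and that count is governed by the coefficients of a degree-$d$ minimal polynomial, which is $\exp(O(d^2\log p))$, not $\exp(O(d\log p))$. Plugging in ``number of distinct eigenvalues $\gg \log N/\log p$'' then only gives $d\gg\sqrt{\log\log N}$ --- precisely the Murty--Sinha barrier the paper is designed to break. (Indeed the paper explicitly contrasts itself with this older approach of ``probing the neighborhood of every algebraic integer up to a certain height.'') Second, the proposed analytic ingredient --- a Riemann--von Mangoldt-style count localised at scale $1/\log p$ for $\prod_\sigma L(s,f^\sigma)$, or a ``spectral-consistency'' argument --- is speculative and does not obviously produce a lower bound on the number of distinct eigenvalues, let alone distinct norms, with the claimed constant. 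The essential missing idea is to look for a single very small nonzero value of $a_f(p)$ rather than to count distinct values; the former is what converts the exponent-$d$ norm bound directly into a $\log\log N$ bound.
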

By definition, the same lower bound holds for the maximum degree of the Hecke fields $K_f$ of newforms $f$ (see Section \ref{sec:proof12}). Note that $K_f$ always contains the cyclotomic field $\Q(\zeta_{\ord(\chi)})$ generated by the values of $\chi$ (a consequence of the Hecke relations at $p^2$, see Lemma \ref{lemma:strongMult} below), so the trivial lower bound in both cases is $\varphi(\ord(\chi))$.

\begin{remark}
  The result of Billerey--Menares mentioned above actually shows that when $\ell\ge(k+1)^4$ belongs to an explicit set $\Lc$ of primes with lower density $\ge 3/4$, there exists a normalized eigenform $f\in S_k(\Gamma_0(\ell))^{\new}$ with $\deg(K_f)\gg_k \log{\ell}$. Hence, for $\varepsilon>0$, if an integer $N$ has a prime factor $\ell>N^{\varepsilon}$ that lies in $\Lc$, then $\deg(K_f)\gg_{k,\varepsilon}\log{N}$ for some $f\in S_k(\Gamma_0(\ell))\hookrightarrow S_k(\Gamma_0(N))$. Hence, Theorem \ref{thm2} with ``newform'' replaced by the weaker conclusion ``normalized eigenform'' would follow from \cite[Theorem 2]{BilMen16} for almost all integers $N$.
\end{remark}

In certain special situations it can be shown that the degree of the number field $K_f$ is large for \emph{all} newforms $f \in S_{k}(\Gamma_0(N),\chi)$. For instance, when $p^{r} \mid N$ Brumer \cite[p.3, Theorem 5.5, Remark 5.7]{Brum95} showed that $K_f$ contains the maximal real subfield of the $p^s$-th roots of unity, where $s=\lceil{\frac{r}{2}-1-\frac{1}{p-1}}\rceil$ (see also \cite{MO10,MR2046205}).

We exhibit a similar phenomenon which sometimes allows to significantly improve on Theorem \ref{thm2} and the trivial bound $\deg K_f \geq \varphi(\ord \chi)$, when $k$ is odd, depending on the nebentypus $\chi$ and the factorization of $N$.

  \begin{theorem}\label{thm3}
    Let $k\ge 3$ be an odd integer, $N\ge 1$ be square-free, $\chi: (\Z/N)^\times\to\C^\times$ be a homomorphism such that $\chi(-1)=(-1)^k$, and decompose
    \[N_2=\prod_{\substack{p\mid N\\ \chi_p=1}}p, \hspace{1cm}\chi=\prod_{p\mid N}\chi_p,\hspace{0.5cm} \text{with}\hspace{0.5cm} \chi_p:(\Z/p)^\times\to\C^\times.\]
    Then, for any newform $f\in S_k(\Gamma_0(N),\chi)$,
    \[\deg{K_f}\geq  \varphi(\ord(\chi)) \cdot 2^{\omega(N_2)-\omega((N_2,2\ord(\chi))) - 1} ,\]
    In particular, if $(N_2,2 \ord(\chi))=1$, then
    \[\deg{K_f}\geq \varphi(\ord(\chi)) \cdot 2^{\omega(N_2) - 1}.\]   
  \end{theorem}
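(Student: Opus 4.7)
The plan is to exhibit a large explicit subfield of $K_f$, namely the compositum $L := \Q(\zeta_m, \sqrt{p_1}, \dots, \sqrt{p_R})$ with $m = \ord(\chi)$ and $p_1,\dots,p_R$ the distinct prime divisors of $N_2$, and to bound $[L:\Q]$ from below by elementary Galois theory.

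First I would show that $\sqrt{p} \in K_f$ for every $p \mid N_2$. Since $N$ is squarefree, $p \mid N_2$ means $p \| N$ and $\chi_p \equiv 1$. Atkin--Lehner theory applied to the $p$-new form $f$ of level $N$ with $\chi_p$ trivial gives the local relation $a_p(f)^2 = p^{k-2}$, equivalently $a_p(f) = \epsilon_p\, p^{(k-2)/2}$ with $\epsilon_p \in \{\pm 1\}$. Because $k$ is odd, $p^{(k-2)/2} = p^{(k-3)/2}\sqrt{p}$ with $p^{(k-3)/2} \in \Q^\times$, so $a_p(f) \in K_f$ forces $\sqrt{p} \in K_f$. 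Combined with the inclusion $\Q(\zeta_m) \subseteq K_f$ recalled after Theorem~\ref{thm2} (a consequence of the Hecke relations at $p^2$), this yields $K_f \supseteq L$.

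Next I would bound $[L:\Q]$ via the compositum formula. Set $E = \Q(\zeta_m)$ and $F = \Q(\sqrt{p_1},\dots,\sqrt{p_R})$; then $[E:\Q] = \varphi(m)$ and $[F:\Q] = 2^R$ (distinct primes are multiplicatively independent modulo squares), so
\[ [L : \Q] \;=\; \frac{\varphi(m) \cdot 2^R}{[E \cap F : \Q]}. \]
The intersection $E \cap F$ is a multiquadratic field, and each of its quadratic subfields has the form $\Q(\sqrt{d})$ with $d = \prod_{p \in T} p$ for some nonempty $T \subseteq \{p_1,\dots,p_R\}$. By Kronecker--Weber, $\Q(\sqrt{d}) \subseteq \Q(\zeta_m)$ iff the conductor of $\Q(\sqrt{d})$ divides $m$, which in particular forces every prime factor of $d$ to divide $2m$ (the $2$ accounting for the $4$ or $8$ that may enter the conductor when $d\not\equiv 1 \pmod{4}$). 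Hence every contributing $T$ lies in $\{p_i : p_i \mid 2\ord(\chi)\}$, a set of cardinality $\omega((N_2, 2\ord(\chi)))$, so $[E \cap F : \Q] \leq 2^{\omega((N_2, 2\ord(\chi)))}$ and the stated lower bound on $\deg K_f$ follows. The ``in particular'' statement is the case $(N_2, 2\ord(\chi)) = 1$.

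The only substantive input is the Atkin--Lehner relation $a_p(f)^2 = p^{k-2}$: one must invoke it carefully to rule out the $p$-old vanishing $a_p(f) = 0$ that occurs, for instance, at $p^2 \| N$, and to confirm it persists in the presence of a nontrivial global nebentypus whose $p$-part is trivial. This is classical in the Atkin--Lehner--Li framework but is the only non-elementary ingredient; once it is in place, everything else reduces to linear algebra in $\Q^\times/(\Q^\times)^2$ together with the classification of quadratic subfields of cyclotomic fields.
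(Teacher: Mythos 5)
Your overall strategy is the same as the paper's (locate square roots of the primes dividing $N_2$ inside the Hecke field via Atkin--Lehner at bad primes, combine with the cyclotomic subfield $\Q(\zeta_{\ord \chi})$, and count degrees by linear disjointness), but the key arithmetic input is misstated in a way that matters, and the discrepancy is visible in the final bound.

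The relation you invoke, $a_f(p)^2 = p^{k-2}$ for $p \mid N_2$, is only correct when the \emph{entire} nebentypus is trivial. In general, the Atkin--Lehner--Li identities give $\lambda_p(f)^2 = \chi_p(-1)\,\overline\chi_{N/p}(p) = \overline\chi_{N/p}(p)$ (since $\chi_p = 1$) together with $\lambda_p(f) = -p^{k/2-1}/a_f(p)$, which combine to
\[
a_f(p)^2 \;=\; \chi_{N/p}(p)\, p^{k-2},
\]
carrying the extra root of unity $\chi_{N/p}(p)$. Because $k$ is odd, this shows $\eta_p\sqrt{p} \in K_f$ for some $\eta_p$ with $\eta_p^2 = \chi_{N/p}(p)$, i.e. $\eta_p$ lies in $\Q(\zeta_{2\ord(\chi)})$ but not necessarily in $\Q(\zeta_{\ord(\chi)}) \subset K_f$. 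Hence what you actually get is $\sqrt{p} \in K_f(\zeta_{2\ord(\chi)})$, not $\sqrt{p} \in K_f$. This is exactly the subtlety the paper handles by passing to $K_f(\zeta_{2\ord(\chi)})$, and the price is the bound $\deg K_f \geq \tfrac{1}{2}[K_f(\zeta_{2\ord(\chi)}):\Q]$, which is where the exponent $-1$ in the theorem comes from. Indeed, if your step were correct as stated, the compositum computation would yield $\deg K_f \geq \varphi(\ord\chi)\cdot 2^{\omega(N_2)-\omega((N_2,2\ord\chi))}$, strictly stronger than the stated theorem by a factor of $2$ -- a symptom of the missing $\chi_{N/p}(p)$. (For trivial $\chi$ your argument is fine and does give that sharper bound, but the theorem addresses general $\chi$.)

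The Galois-theoretic part of your argument -- the compositum degree formula, the classification of quadratic subfields of $\Q(\sqrt{p_1},\dots,\sqrt{p_R})$, and the conductor criterion bounding $[E\cap F:\Q]$ by $2^{\omega((N_2,2\ord\chi))}$ -- is correct and essentially matches the paper's use of the linear independence of $\{\sqrt{p}: p\mid N_2,\ p\nmid 2\ord\chi\}$ over $\Q(\zeta_{\ord\chi})$. The fix is simply to run the same compositum argument inside $K_f(\zeta_{2\ord(\chi)})$ and divide by $[K_f(\zeta_{2\ord(\chi)}):K_f]\le 2$ at the end.
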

  For example, given $\varepsilon > 0$ and $k\ge 3$ odd, for a ``typical'' square-free integer $N$ and $\chi$ a random quadratic character mod $N$ (resp. the trivial character), we get
  \[\deg K_f\gg_{\varepsilon} (\log{N})^{\frac{\log{2}}{2} - \varepsilon} \qquad(\text{resp. }\gg_{\varepsilon} (\log{N})^{\log{2} - \varepsilon})\]
  for all newforms $f\in S_k(\Gamma_0(N),\chi)$. In fact it is possible to extend Theorem \ref{thm3} to the case of non-square-free $N$, but we maintain this restriction to keep the exposition simple.

\subsubsection*{A short outline of the proofs}
  
We will now say a few words about the proof of these theorems and the limitations of our method of proof.

 The proof of Theorem \ref{thm1} and Theorem \ref{thm2} proceeds by observing that if we can find a newform $f$ for which the eigenvalue $a_f(p_N)$ is abnormally small in absolute value but non-zero, then the degree of the corresponding Hecke field $K_f$ needs to be large (see Proposition \ref{lem1}). We then use the equidistribution of Hecke eigenvalues (in the form of Murty--Sinha) to prove the existence of such an $f$. This contrasts with the previous analytic approaches in which one probed (using the equidistribution of Hecke eigenvalues) the neighborhood of every algebraic integer up to a certain height. 

The proof of Theorem \ref{thm3} proceeds by first noticing that by strong multiplicity one, the number field $\Q(a_f(n) : n\ge 1)$ coincides with $K_f = \Q(a_f(n) : (n, N) = 1)$. Subsequently we focus exclusively on the ramified primes $p \mid N$. For $k$ odd, the coefficient of $f$ at $p\mid N_2$ is equal to $\sqrt{p}$ multiplied by a factor lying in a small extension of $K_f$ (the eigenvalue of an Atkin--Lehner operator). Considering all these divisors yields the factor $2^{\omega(N_2)}$.

\subsubsection*{Limitations of the method}

The best result that the method of proof of Theorem \ref{thm1} and Theorem \ref{thm2} can theoretically deliver is for each $k$ even and $N \geq 1$ the existence of an $f \in S_{k}(\Gamma_0(N))$ such that  $\deg K_f \gg_{k} \log N$. To see this consider for simplicity $k$ fixed and $N$ odd. Then we expect that the coefficients $a_f(2)$ with $f$ varying in $S_{k}(\Gamma_0(N))$ behave as a collection of roughly $\asymp_{k} N^{1 + o(1)}$ random numbers distributed according to the Sato-Tate law. Therefore by linearity of expectation for any given $\varepsilon > 0$ we expect that there exists a form $f \in S_{k}(\Gamma_0(N))$ with $0 < |a_f(2)| \ll_{k} N^{-1 + \varepsilon}$ and moreover that this is best possible up to the factor $N^{\varepsilon}$. Plugging this into Proposition \ref{lem1} would result in a lower bound $\deg K_f \gg_{k} \log N$ for some $f \in S_k(\Gamma_0(N))$. Note that the existence of a $\delta > 0$ such that for all $k$ fixed and $N$ odd there exists an $f \in S_{k}(\Gamma_0(N))$ with $0 < |a_f(2)| \ll_{k} N^{-\delta}$ would be also enough to obtain the lower bound $\deg K_f \gg_{k} \log N$ for some $f \in S_{k}(\Gamma_0(N))$.

  \subsection*{Acknowledgments}
The work of the first author is partially by PRIN 2015 ``Number Theory and Arithmetic Geometry''.
The third author would like to acknowledge the support of a Sloan fellowship. We would like to thank  Nicolas Billerey, Armand Brumer, and Ricardo Menares for comments on the manuscript. We would like to thank the referees for a careful reading of the paper and useful suggestions.

\section{Proof of Theorem \ref{thm1} and Theorem \ref{thm2}}\label{sec:proof12}
Throughout let $k \geq 2$ and $N \geq 1$ be integers, and $\chi: (\Z/N)^\times\to\C^\times$ a homomorphism such that $\chi(-1)=(-1)^k$.
Let $f\in S_k(\Gamma_0(N),\chi)$ be a normalized eigenform with Fourier expansion
$$
f(z) := \sum_{n \geq 1} a_f(n) e(n z), \qquad a_f(1)=1, \qquad e(z) := e^{2\pi i z}. 
$$
Given a prime $p\nmid N$, we also define (for reasons that will become clear when proving Lemma \ref{lem2})
\[a_f'(p)=\frac{a_f(p)}{2p^{\frac{k-1}{2}}\sqrt{\chi(p)}}\in\R,\]
for a fixed choice of square root.

Since simple Hecke submodules of $S_k(\Gamma_0(N))$ of dimension $d$ correspond to $\Gal(\overline\Q/\Q)$-orbits of size $d$ of (arithmetically) normalized eigenforms $f\in S_k(\Gamma_0(N))$ (see \cite{DiamondIm95}), it suffices to obtain lower bounds for
\[\max_{f\in S_k(\Gamma_0(N),\chi)}\deg K_f,\qquad K_f=\Q\left(a_f(n) : (n,N)=1\right),\]
where $f$ runs over newforms, to prove Theorems \ref{thm1} and \ref{thm2}.

 The first input to our argument is a simple lemma from diophantine approximation, that allows to pass from small values of $|a_f(p)|$ to lower bounds for the degree of the Hecke field.

 \begin{proposition} \label{lem1}
   If $p\nmid N$ and $a_f(p)\neq 0$, then
   \[\deg \Q(a_f(p))\ge \frac{2}{k-1}\cdot \frac{\log{\frac{1}{|a_f'(p)|}}}{\log(4p)}.\]  
  %\[\deg \Q(a_f(p))\ge 1 + \frac{2}{k -1 }\cdot \frac{\log \frac{1}{|a_f(p)|}}{\log(2p)}.\]  
\end{proposition}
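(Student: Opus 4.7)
The proposition is a Liouville-style lower bound for the degree of an algebraic integer $\alpha=a_f(p)$ in terms of how close it is to $0$, using as inputs the Deligne bound and the Galois-equivariance of Fourier coefficients. The strategy is to bound $|\alpha|$ from below via the fact that the norm of a nonzero algebraic integer has absolute value at least $1$, then to invert the inequality and convert it into a lower bound on the degree via the normalized quantity $a_f'(p)$.

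Concretely, set $d=\deg\Q(a_f(p))$ and let $\alpha_1=a_f(p),\alpha_2,\dots,\alpha_d$ denote the Galois conjugates of $a_f(p)$ over $\Q$. First I would observe that each $\alpha_i$ is of the form $a_{f^\sigma}(p)$ for some $\sigma\in\Gal(\overline\Q/\Q)$, where $f^\sigma$ is the newform of weight $k$, level $N$, nebentypus $\chi^\sigma$ obtained by applying $\sigma$ coefficient-wise to $f$. The Deligne bound (Ramanujan--Petersson, valid for all integer $k\ge 2$ since $p\nmid N$) then gives
\[|\alpha_i|\le 2p^{\frac{k-1}{2}}, \qquad i=1,\dots,d.\]
Next, since $a_f(p)$ is an algebraic integer (an eigenvalue of a Hecke operator stabilizing an integral lattice in $S_k(\Gamma_0(N),\chi)$), the product $\prod_{i=1}^d \alpha_i$ equals $(-1)^d$ times the constant term of the minimal polynomial of $\alpha$, hence lies in $\Z$. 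As $a_f(p)\ne 0$ by hypothesis, this product is a nonzero rational integer, so $\prod_i|\alpha_i|\ge 1$.

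Combining these two ingredients yields
\[|a_f(p)|=|\alpha_1|\ge \prod_{i=2}^d|\alpha_i|^{-1}\ge \bigl(2p^{\frac{k-1}{2}}\bigr)^{-(d-1)}.\]
Dividing through by $2p^{(k-1)/2}$ and using the definition of $a_f'(p)$ (whose modulus equals $|a_f(p)|/(2p^{(k-1)/2})$ since $|\sqrt{\chi(p)}|=1$) gives $|a_f'(p)|\ge \bigl(2p^{(k-1)/2}\bigr)^{-d}$. Taking logarithms and solving for $d$ provides
\[d\ge \frac{\log(1/|a_f'(p)|)}{\log 2+\tfrac{k-1}{2}\log p}.\]
The last step is a purely numerical comparison: for every $k\ge 2$ one has $\log 2+\tfrac{k-1}{2}\log p\le \tfrac{k-1}{2}\log(4p)$, which finishes the proof.

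The argument has no serious obstacle; the only point warranting care is the assertion that every Galois conjugate of $a_f(p)$ also satisfies Deligne's bound, which is why I would explicitly invoke that $f^\sigma$ remains a cuspidal newform (with conjugated nebentypus) of the same weight and level, so Deligne applies to it. Everything else is routine.
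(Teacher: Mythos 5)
Your proof is correct and follows essentially the same route as the paper: bound the product of conjugates below by $1$ (nonzero algebraic integer), bound each conjugate above by $2p^{(k-1)/2}$ via Deligne, normalize to $a_f'(p)$, and finish with the elementary inequality $\log 2 + \tfrac{k-1}{2}\log p \le \tfrac{k-1}{2}\log(4p)$ valid for $k\ge 2$. The only cosmetic difference is that you justify the bound on the conjugates via Galois-conjugate newforms $f^\sigma$, whereas the paper invokes that $a_f(p)$ is a sum of two $p$-Weil numbers of weight $k-1$; both are standard and equivalent in effect.
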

\begin{proof}
  Since $a_f(p)$ is an algebraic integer \cite[Corollary 12.4.5]{DiamondIm95}, its norm is a nonzero integer. Thus if we denote by $g$ the degree of $a_f(p)$ and by $a_{f, 1}(p), \ldots, a_{f,g}(p)$ all of the conjugates of $a_f(p)$ (including $a_f(p)$ itself), then, 
  \[\prod_{i=1}^g|a_{f,i}(p)|\ge 1.\]
  By Deligne's proof of the Ramanujan--Petersson conjecture for $f$ \cite{Del71}, $a_f(p)$ is the sum of two $p$-Weil numbers of weight $k-1$, so $|a_{f,i}(p)|\le 2p^{\frac{k-1}{2}}$ for all $i$. Therefore,
  \[\prod_{i=1}^g|a_{f,i}(p)|\le |a_f(p)|\left(2p^{\frac{k-1}{2}}\right)^{g-1}\]
  and the claim follows.
\end{proof}
\begin{remark}
  If $\Gamma_f\le\Gal(K_f/\Q)$ is the group of inner twists of $f$ (see \cite[Section 3]{Ribet80}, \cite[Section 3]{Ribet85}), then the proof of Proposition \ref{lem1} shows that the lower bound can actually be improved by a factor of $|\Gamma_f|$ (or even $|\Gamma_f|^2$ if $\chi(p)\in\Q^\times$). In the case $k=2$, $\chi=1$, $N$ square-free, there are no nontrivial inner twists, but otherwise it is believed that $|\Gamma_f|$ could become large; if $\chi^2\neq 1$, there is always a nontrivial inner twist given by conjugation.
\end{remark}
We will now use the equidistribution of Hecke eigenvalues to exhibit a newform $f\in S_{k}(\Gamma_0(N), \chi)^\new$ for which $a_f(p)$ is abnormally small, yet non-zero. This will therefore give a lower bound for the degree of $\Q(a_f(p))$ and thus a lower bound for the degree of $K_f$.

\begin{lemma} \label{lem2}
  Let $p\nmid N$. There exists a newform $f\in S_k(\Gamma_0(N),\chi)^{\new}$ such that,
  \begin{equation}
    \label{eq:apsmall}
    % 0 < \frac{|a_f(p)|}{2 p^{\frac{k - 1}{2}}} \leq \frac{\pi}{2} \cdot \frac{p + 1}{p} \cdot \frac{\log p}{\log N}
    0 < |a_f'(p)| \leq \frac{\pi}{2} \cdot \frac{p + 1}{p} \cdot \frac{\log p}{\log N}
    %\qquad \Big ( < \pi \cdot \frac{\log p}{\log N} \Big )
  \end{equation}
  for all sufficiently large $N$ (in terms of $k$).  
\end{lemma}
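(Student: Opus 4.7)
The plan is to apply the Murty--Sinha effective form of Serre's equidistribution theorem \cite{MurtSinh09} to produce a newform whose Hecke angle at $p$ lies very close to $\pi/2$ but not exactly at $\pi/2$.

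By Deligne's bound, $a_f'(p)\in[-1,1]$, so I may write $a_f'(p)=\cos\theta_f(p)$ for a unique $\theta_f(p)\in[0,\pi]$. Then $|\theta_f(p)-\pi/2|\le\eta$ implies $|a_f'(p)|\le\sin\eta\le\eta$, so it suffices to exhibit a newform whose angle lies strictly on one side of $\pi/2$ but within distance
\[\eta=\frac{\pi}{2}\cdot\frac{p+1}{p}\cdot\frac{\log p}{\log N}.\]
The natural probability measure here is the $p$-adic Plancherel measure
\[\mu_p=\frac{p+1}{\pi}\cdot\frac{\sin^2\theta\,d\theta}{(p^{1/2}+p^{-1/2})^2-4\cos^2\theta},\]
whose density at $\theta=\pi/2$ equals $\frac{p}{\pi(p+1)}$. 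Consequently each of the two half-intervals $(\pi/2-\eta,\pi/2)$ and $(\pi/2,\pi/2+\eta)$ carries $\mu_p$-mass $(1+o(1))\cdot\frac{\log p}{2\log N}$ as $N\to\infty$.

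Since $\dim S_k(\Gamma_0(N),\chi)^{\new}$ grows like $N^{1-o(1)}$ (uniformly in $\chi$), combining this with Murty--Sinha's effective equidistribution of the multiset $\{\theta_f(p)\}$ with respect to $\mu_p$ yields a positive count of newforms in each half-interval once $N$ is large in terms of $k$. Any newform produced this way has $\theta_f(p)\in(\pi/2-\eta,\pi/2+\eta)\setminus\{\pi/2\}$, which gives $0<|a_f'(p)|\le\sin\eta\le\eta$, as required.

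The main obstacle is verifying that the error term in the effective equidistribution is smaller than the main term for intervals of width $\asymp\log p/\log N$, which shrink with $N$. Concretely, one expands a smooth majorant/minorant of the indicator of the interval in Chebyshev polynomials of the second kind, evaluates the sums $\sum_f U_n(\cos\theta_f(p))=\sum_f a_f(p^n)/(p^{n(k-1)/2}\chi(p)^{n/2})$ via the Eichler--Selberg or Petersson trace formula for $T_{p^n}$, and truncates the Chebyshev expansion at a level where the tail (controlled by smoothness) balances the Kloosterman-sum / class-number contributions appearing in the trace formula. The restriction to genuine newforms (as opposed to arbitrary eigenforms) is handled by the standard M\"obius inclusion--exclusion over divisors of $N$.
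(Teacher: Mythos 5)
Your proposal follows the paper's own strategy closely: use the Murty--Sinha effective form of Serre equidistribution (Erd\H{o}s--Tur\'an inequality $+$ Chebyshev expansion $+$ Eichler--Selberg trace formula applied to $T_{p^m}$) to find a newform whose normalized eigenvalue $a_f'(p)$ lands in a shrinking window around $0$ but misses $0$, with the window width $\asymp \log p/\log N$ coming from balancing the trace formula's elliptic-term growth $p^{3M/2}$ against the dimension $\asymp N^{1+o(1)}$. That is exactly the paper's argument.

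Two issues remain, however. First, the step you label ``the main obstacle'' --- checking that the equidistribution error is dominated by the main term at scale $\log p/\log N$ --- is the whole proof; describing the ingredients (majorant/minorant in Chebyshev polynomials, trace formula, M\"obius inclusion--exclusion for newforms) does not show they balance in one's favour. Carrying it out, with $M \lesssim \tfrac{2}{3}\log N/\log p$ forced by the elliptic term, gives an Erd\H{o}s--Tur\'an error $\gtrsim \tfrac{1}{M+1} \gtrsim \tfrac{3}{2}\cdot\tfrac{\log p}{\log N}$, and one must compare this against the mass of the chosen interval.

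Second, and this is where the constant suffers: you target a single half-interval $(\pi/2-\eta,\pi/2)$ or $(\pi/2,\pi/2+\eta)$, whose $\mu_p$-mass is only $\approx \tfrac{1}{2}\cdot\tfrac{\log p}{\log N}$ when $\eta = \tfrac{\pi}{2}\cdot\tfrac{p+1}{p}\cdot\tfrac{\log p}{\log N}$. That is not enough to beat the error $\approx \tfrac{3}{2}\cdot\tfrac{\log p}{\log N}$; you would need $\eta$ roughly a factor $3$ larger, losing the stated constant. The paper instead counts forms in the two-sided set $\{0 < |a_f'(p)| \le A\}$ (the full punctured neighbourhood of $\pi/2$), which carries twice the mass, and invokes a version of the Erd\H{o}s--Tur\'an inequality (\cite[Theorem~8]{MurtSinh09}) that already excludes the atom at $a_f'(p)=0$, so no mass is wasted and the bound $A > \tfrac{\pi}{2}\cdot\tfrac{p+1}{p}\cdot\tfrac{\log p}{\log N}$ goes through. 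You should either pass to the two-sided interval and argue the atom at $\pi/2$ away (its count is itself bounded by the equidistribution error, since $\mu_p$ has no atoms), or accept a weaker constant.
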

\begin{proof}
  Let $B_k(\Gamma_0(N),\chi)$ be the $\overline\Q$-basis of $S_k(\Gamma_0(N),\chi)^{\new}$ composed of the $d_{k,N,\chi}$ newforms at level $N$. For $(n,N)=1$, let us also normalize Hecke operators acting on $S_k(\Gamma_0(N),\chi)^\new$ as $T_n':=T_n/(2n^{\frac{k-1}{2}}\sqrt{\chi(n)})$. By \cite[Sections 5.1,\,5.3]{Ser97}, the normalized eigenvalues $(a_f'(p))_{f\in B_k(\Gamma_0(N),\chi)}$ are distributed in $[-1,1]$ as $N\to\infty$ according to a measure converging to the Sato--Tate measure as $p\to\infty$. 

  For $A\in(0,1)$, let us give a lower bound on
  \begin{eqnarray*}
    C_{k,N,\chi}(A)&:=&\frac{|\{f\in B_k(\Gamma_0(N),\chi) : 0<|a'_f(p)|\le A\}|}{d_{k,N,\chi}}.
  \end{eqnarray*}
  If the nebentypus is trivial and we do not necessarily want to find a form that is new, we can directly apply \cite[Theorem 19]{MurtSinh09} to get \eqref{eq:Clowerbound2} below. In general, \cite[Theorem 8, Lemma 17, Section 10]{MurtSinh09} show that for any $M\ge 1$,
  \begin{eqnarray}
    &&\left|C_{k,N,\chi}(A)-2\int_0^A F(-x)dx\right|\label{eq:ErdosTuran}\\
    &&\le \frac{1}{M+1}+\sum_{1\le|m|\le M} \left(\frac{1}{M+1}+\min \left(2A,\frac{1}{\pi|m|}\right)\right)\left|\frac{\tr\left(T'_{p^{|m|}}-T'_{p^{|m|-2}}\right)}{d_{k,N,\chi}} - c_m\right|\nonumber,
  \end{eqnarray}
  where $c_m=\lim_{k + N \to\infty} \tr(T'_{p^{|m|}}-T'_{p^{|m|-2}})/d_{k,N,\chi}$ and $F(x)=\sum_{m\in\Z} c_me(mx)$, with the convention that $T'_{n}=0$ if $n<1$. The Eichler--Selberg trace formula for $S_k(\Gamma_0(N),\chi)$ \cite[(34)]{Ser97} and \cite[Section 5.3]{Ser97} gives that,
  \begin{eqnarray*}
    \tr T'_{p^m}&=&\sum_{N_1\mid N}d^*(N/N_1)\Big(A_{\text{main}}(k,N_1,T'_{p^m})+A_{\text{ell}}(k,N_1,\chi,T'_{p^m})\\
                &&\hspace{3cm}+A_{\text{hyp}}(k,N_1,\chi,T'_{p^m})+\delta_{\substack{k=2\\\chi=1}} A_{\text{par}}(k,N_1,T'_{p^m})\Big),
  \end{eqnarray*}
  for any $m\ge 1$, with the main, elliptic, hyperbolic and parabolic terms given in \cite[(35,\,39,\,45,\,47)]{Ser97}, and where $d^*$ is the multiplicative function defined by $d^*(\ell)=-2$, $d^*(\ell^2)=1$, and $d^*(\ell^\alpha)=0$ for $\ell$ a prime and $\alpha\ge 3$ an integer. By \cite[(35)]{Ser97},
  \[\sum_{N_1\mid N} d^*(N/N_1) A_{\text{main}}(k, N_1, T'_{p^m}) =\frac{\psi(N)^\new(k-1)}{12}\cdot p^{-m/2}\cdot \delta_{m\text{ even}},\]
  where $\psi(N)^\new=\sum_{N_1\mid N} d^*(N/N_1) N_1\prod_{\ell\mid N_1}(1+1/\ell)$, and by \cite[Section 9]{MurtSinh09},
  \[F(x)=\frac{\psi(N)^\new(k-1)}{12d_{k,N,\chi}}\cdot \frac{2(p+1)}{\pi}\cdot \frac{\sqrt{1-x^2}}{p+2+1/p-4x^2}.\]
  By \cite[(44,\,46,\,48)]{Ser97}, we find as in \cite[(8)]{MurtSinh09} that for any $N_1\mid N$,
  \begin{eqnarray*}
    |A_{\text{ell}}(k,N_1,\chi,T'_{p^m})|&\le&\frac{4e}{\log{2}}\cdot 2^{\omega(N_1)}p^{3m/2}\log(4p^{m/2}),\\
    |A_{\text{hyp}}(k,N_1,\chi,T'_{p^m})-A_{\text{hyp}}(k,N_1,\chi,T'_{p^{m-2}})|&\le&\sqrt{N_1}\tau(N_1),\\
    |A_{\text{par}}(k,N_1,T'_{p^m})-A_{\text{par}}(k,N_1,T'_{p^{m-2}})|&\le&p^{m/2}.
  \end{eqnarray*}
  Moreover, we note that $|d^*(n)|\le 2^{\omega(n)}\le \tau(n)\ll_{\varepsilon} n^{\varepsilon}$ for all integers $n$ (see \cite[(52)]{Ser97}). Hence, this yields with \eqref{eq:ErdosTuran}
  \begin{align}    \label{eq:Clowerbound2}
    C_{k,N,\chi}(A)\geq &\frac{\psi(N)^\new(k-1)}{12d_{k,N,\chi}}\cdot \frac{4(p+1)}{\pi}\int_0^A \frac{\sqrt{1-x^2}}{p+2+1/p-4x^2}dx-\frac{1}{M+1}\\ \label{eq:Clowerbound}
                      &-c(\varepsilon ) N^\varepsilon\left(\frac{4e}{\log{2}}\cdot \frac{p^{3M/2}}{d_{k,N,\chi}}\cdot\log(4p^{M/2})-\frac{\sqrt{N}}{d_{k,N,\chi}}-\delta_{\substack{k=2\\\chi=1}} \cdot\frac{p^{M/2}}{d_{k,N,\chi}}\right),
  \end{align}
  for any $\varepsilon>0$, with $c(\varepsilon) > 0$ a constant depending only on $\varepsilon$.  As in \cite[(61, 62)]{Ser97},
  \[d_{k,N,\chi} =\frac{k-1}{12}\cdot \psi(N)^\new+O \left(N^{1/2+\varepsilon}\right),\]
  therefore given $\varepsilon<1/100$ positive,  as long as $M \leq (2/3 - 3\varepsilon) \log (N) / \log p$, all the three terms in \eqref{eq:Clowerbound} are less than $c'(\varepsilon) N^{-\varepsilon/100}$ for all $N$ and some constants $c'(\varepsilon)$ depending only on $\varepsilon$\footnote{The choice of $M$ is motivated by the fact that the growth of \eqref{eq:Clowerbound} is dominated by the first term in \eqref{eq:Clowerbound} which is roughly of size $N^{\varepsilon} p^{3 M / 2} N^{-1}$. Thus it is sufficient to choose $M$ so that this term is negligible, that is $p^{3 M / 2} N^{\varepsilon - 1} \ll N^{-\varepsilon}$.}.

  By a Taylor expansion at $x = 0$,
$$
\int_0^A \frac{\sqrt{1-x^2}}{p+2+1/p-4x^2}dx = \frac{p}{(p + 1)^2} \cdot A\cdot (1 + O(A)),
$$
therefore
\begin{eqnarray*}
  C_{k,N,\chi}(A)& \geq &\frac{4}{\pi}\cdot \frac{p}{p+1}\cdot A  (1+O(A))-\frac{1}{M+1}-\frac{c'(\varepsilon)}{N^{\varepsilon/100}}.
\end{eqnarray*}
  Hence, given $\varepsilon > 0$, choosing $A$ so that,
  $$
  \frac{4}{\pi} \cdot \frac{p}{p + 1} \cdot A > \Big ( \frac{3}{2} + \varepsilon \Big )\cdot \frac{\log p}{\log N} > \frac{1 + \varepsilon}{M + 1}
  $$
  ensures that $C_{k,N,\chi}(A) > 0$ for all sufficiently large $N$. In particular fixing a sufficiently small $\varepsilon > 0$ we see that for all $N$ large enough any 
  $$
  A > \frac{\pi}{2} \cdot \frac{p + 1}{p} \cdot \frac{\log p}{\log N}
  $$
  is acceptable.
  
\end{proof}

Theorem \ref{thm1} and Theorem \ref{thm2} now follows from combining Proposition \ref{lem1} and Lemma \ref{lem2} and specializing accordingly. 

\section{Proof of Theorem \ref{thm3}}

For $k\ge 2$ and $N\ge 1$ square-free, let $f\in S_k(\Gamma_0(N),\chi)$ be a newform. We factor the character $\chi$ as $\prod_{p \mid  N} \chi_{p}$ with $\chi_{p} : (\mathbb{Z} / p)^{\times} \rightarrow \mathbb{C}^{\times}$ a character modulo $p$.   The idea behind Theorem \ref{thm3} is inspired by \cite{ChoieKohn06}, where Choie and Kohnen show that the non-diagonalizability of a ``bad'' Hecke operator $T_p$ (i.e. with $p\mid N$) implies that $\sqrt{p}\in \Q(a_n(f): n\ge 1)$, and hence that this field has degree at least $2^s$ if $s$ such operators are non-diagonalizable.

Let
$$
N_2 = \prod_{\substack{p \mid N \\ \chi_{p} = 1}} p
$$
and write $N = N_1 N_2$, with $(N_1, N_2) = 1$ since $N$ is square-free. It follows that $\chi = \chi_{N_1} \chi_{N_2}$ with $\chi_{N_1}$ a primitive character of modulus $N_1$ and $\chi_{N_2} = 1$ the principal character modulo $N_2$.  Our argument is based on the Atkin--Lehner operators
\begin{align*}W_{p} & : S_k(\Gamma_0(N),\chi)\to S_k(\Gamma_0(N),\overline\chi_{p} \chi_{N / p}) \ , \ p \mid N 
\end{align*}
where $\chi_{N / p} = \prod_{\ell \mid N / p} \chi_{\ell}$ and on the properties of the pseudo-eigenvalues $\lambda_{p}(f)$ studied by Atkin and Li \cite{Li74,AtkinLi78}. Examining these elements gives bounds on the degrees of Fourier coefficients $a_f(p)$ at ``bad'' primes $p\mid N_2$. In turn, this yields lower bounds on $\deg{K_f}$ since:
\begin{lemma}\label{lemma:strongMult}
  We have $K_f=\Q(a_f(n) : n\ge 1)$.
\end{lemma}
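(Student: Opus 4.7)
The plan is to prove the non-trivial inclusion $\Q(a_f(n) : n \ge 1) \subseteq K_f$ by combining strong multiplicity one for newforms with the Hecke relation at $p^2$. Concretely, I would take any $\sigma \in \Gal(\overline\Q/K_f)$ and show that the Galois conjugate $f^\sigma$ (with Fourier coefficients $\sigma(a_f(n))$) is again a newform in the same space $S_k(\Gamma_0(N),\chi)$; then the Atkin--Lehner strong multiplicity one theorem (two newforms of the same level and nebentypus that share Hecke eigenvalues at all primes outside $N$ are equal) forces $f^\sigma = f$, hence $\sigma$ fixes every $a_f(n)$, which gives the inclusion.

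The first key step is to verify that $f^\sigma$ has nebentypus exactly $\chi$ rather than $\chi^\sigma$. For this I would invoke the Hecke relation
\[
a_f(p)^2 - a_f(p^2) = \chi(p)\, p^{k-1}
\]
valid for every prime $p \nmid N$, which expresses $\chi(p)$ as a polynomial with rational coefficients in $a_f(p)$ and $a_f(p^2)$. In particular $\chi(p) \in K_f$ for every $p \nmid N$, so $\sigma$ fixes each such $\chi(p)$. By Dirichlet's theorem on primes in arithmetic progressions, every residue class in $(\Z/N)^\times$ is represented by some prime coprime to $N$, so $\sigma$ fixes all values of $\chi$ and $\chi^\sigma = \chi$. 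In particular this also recovers the observation $\Q(\zeta_{\ord(\chi)}) \subseteq K_f$ mentioned after Theorem \ref{thm2}.

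Next, I would argue that $f^\sigma$ is itself a newform of level $N$ and nebentypus $\chi$. This follows because the action of $\Aut(\C/\Q)$ on $q$-expansions preserves the space $S_k(\Gamma_0(N),\chi)$ (the newform and oldform subspaces are each defined over $\Q$), and being a normalized Hecke eigenform is preserved under Galois conjugation since the Hecke operators $T_n$ with $(n,N)=1$ are defined over $\Q$. Moreover, for $(n,N)=1$ the eigenvalue of $T_n$ on $f^\sigma$ is $\sigma(a_f(n)) = a_f(n)$ by the hypothesis $\sigma \in \Gal(\overline\Q/K_f)$, so $f$ and $f^\sigma$ agree on all $T_n$ with $(n,N)=1$.

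Finally, strong multiplicity one (Atkin--Lehner, see \cite[Thm.~4.6.19]{DiamondIm95} or the original \cite{Li74,AtkinLi78}) then yields $f^\sigma = f$, so $\sigma(a_f(n)) = a_f(n)$ for every $n \ge 1$, proving $a_f(n) \in K_f$ for all $n$ and hence the lemma. The most delicate point in the argument is the nebentypus invariance handled in the second paragraph; once that is in place, the rest is a direct application of standard newform theory.
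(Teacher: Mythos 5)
Your proof is correct and follows essentially the same route as the paper's: use the Hecke relation at $p^2$ to see that $\chi$ takes values in $K_f$, deduce that any $\sigma$ fixing $K_f$ satisfies $\chi^\sigma = \chi$ and sends the newform $f$ to a newform in the same space $S_k(\Gamma_0(N),\chi)$ agreeing with $f$ at all Hecke operators $T_n$ with $(n,N)=1$, and then invoke strong multiplicity one to conclude $f^\sigma = f$. The only superficial difference is that you work directly with $\Gal(\overline\Q/K_f)$ and spell out the step $\chi^\sigma=\chi$ (via Dirichlet, which is sufficient though slightly more than needed — multiplicativity of $\chi$ together with the fact that every residue class in $(\Z/N)^\times$ has a representative that is a product of primes coprime to $N$ already does it), whereas the paper passes to a finite Galois closure $L \supset K$ and treats $\chi^\sigma=\chi$ as immediate from the same Hecke-relation tower.
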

\begin{proof}
  Let $K := \Q(a_f(n): n \geq 1)$ and let $L$ be its Galois closure. By the Hecke relations $a_f(p)^2 = a_f(p^2) - p^{k - 1}\chi(p)$ for all $p \nmid N$, we have the tower of extensions $\Q(\zeta_{\ord \chi})\subset K_f\subset K\subset L$. By Galois theory, it suffices to show that $\Gal(L/K_f)\subset\Gal(L/K)$. To that effect, let $\sigma\in\Gal(L/K_f)$. By the fact that $\chi^\sigma=\chi$ and \cite[Corollary 12.4.5]{DiamondIm95}, $f^\sigma$ is a newform in $S_k(\Gamma_0(N),\chi)$ whose Fourier coefficients coincide with those of $f$ at all integers co-prime to $N$. By strong multiplicity one \cite[Theorem 6.2.3]{DiamondIm95}, $f=f^\sigma$, so that $\sigma$ fixes all coefficients of $f$, i.e. $\sigma$ fixes $K$.
\end{proof}

Recall that for $p \mid  N$, the pseudo-eigenvalue $\lambda_{p}(f)\in\C$ is defined by the equation
\[W_{p}f=\lambda_{p}(f)g,\]
where $g\in S_k(\Gamma_0(N),\overline\chi_{p}\chi_{N / p})$ is a newform (see \cite[p.224]{AtkinLi78}) given by
\begin{equation}
  \label{eq:agp}
  a_g(\ell)=
  \begin{cases}
    \overline{\chi}_{p}(\ell) a_f(\ell)&:\ell \neq p\\
    \chi_{N/p}(p)\overline{a_f(p)}&: \ell = p
  \end{cases}
\end{equation}
for primes $\ell$ (\cite[(1.1)]{AtkinLi78}).

In general, we only know that the pseudo-eigenvalue $\lambda_{p}(f)$ is algebraic with modulus 1 (\cite[Theorem 1.1]{AtkinLi78}). However, under additional assumptions on $\chi$, we have the following information on its field of definition:
\begin{lemma}\label{lemma:fieldsDefinition} Let $p \mid  N_2$. Then,
  $\lambda_{p}(f)\in\Q(\zeta_{2\ord(\chi)}) $.
\end{lemma}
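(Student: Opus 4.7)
The plan is to first use strong multiplicity one to show that $g=f$, so that $f$ itself is an eigenvector of $W_p$, and then to analyze the scalar by which $W_p^2$ acts on $f$.

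Since $p \mid N_2$, the character $\chi_p$ is trivial, so formula \eqref{eq:agp} simplifies to $a_g(\ell)=a_f(\ell)$ for every prime $\ell\neq p$. Moreover $g\in S_k(\Gamma_0(N),\overline{\chi_p}\chi_{N/p}) = S_k(\Gamma_0(N),\chi)$ lies in the same space as $f$. Two newforms in that space whose Hecke eigenvalues agree at all primes outside $\{p\}$ must be equal by strong multiplicity one (exactly the argument invoked in the proof of Lemma \ref{lemma:strongMult}); hence $g=f$, which says $W_p f = \lambda_p(f)\cdot f$, i.e.\ $f$ itself is an eigenvector of $W_p$.

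Squaring yields $W_p^2 f = \lambda_p(f)^2 f$, so it suffices to show that $W_p^2$ acts on $f$ by an element of $\Q(\zeta_{\ord(\chi)})$: a square root then lies in $\Q(\zeta_{2\ord(\chi)})$, as required. This is a direct matrix calculation: taking any representative $W_p = \bigl(\begin{smallmatrix}pa & b\\ Nc & pd\end{smallmatrix}\bigr)$ of determinant $p$, one finds $W_p^2 = p\cdot\gamma$ for an explicit $\gamma \in \Gamma_0(N)$, so that (in the standard Atkin--Lehner normalization, which absorbs the scalar $p$ into the slash operator) $W_p^2$ acts on $S_k(\Gamma_0(N),\chi)$ as multiplication by $\chi(\delta_\gamma)$, where $\delta_\gamma$ is the bottom-right entry of $\gamma$. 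Since $\chi(\delta_\gamma)$ is a value of $\chi$, it is a root of unity of order dividing $\ord(\chi)$, and hence lies in $\Q(\zeta_{\ord(\chi)})$.

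The main obstacle is the bookkeeping in this second step: $W_p$ is defined only up to right-multiplication by $\Gamma_0(N)$, and one must verify that (under the standard normalization) the resulting scalar for $W_p^2$ indeed lies in $\Q(\zeta_{\ord(\chi)})$ regardless of the chosen representative. Any residual ambiguity from changing representative is absorbed by further values of $\chi$, so it does not enlarge the field of definition. Alternatively, one may invoke the explicit formula of Atkin--Li \cite{AtkinLi78} for $\lambda_p(f)$ in the case $\chi_p = 1$, which yields the sharper statement $\lambda_p(f) \in \Q(\zeta_{\ord(\chi)})$ with no further computation.
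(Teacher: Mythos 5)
Your argument takes essentially the same route as the paper: use strong multiplicity one (plus the fact that $\chi_p=1$ means $g$ and $f$ live in the same space and agree at all primes $\ell\neq p$) to conclude $g=f$, and then read off $\lambda_p(f)^2$ from the action of $W_p^2$. The only structural difference is cosmetic: the paper first invokes Atkin--Li's Proposition~1.1 for the clean identity $W_p^2=\chi_p(-1)\overline{\chi}_{N/p}(p)\,\id$ to get $\lambda_p(f)\lambda_p(g)=\pm\overline\chi_{N/p}(p)$, and \emph{then} sets $g=f$; you set $g=f$ first and re-derive the $W_p^2$ identity by a direct matrix computation. Your computation is correct in substance (with the caveat about choice of representative, which you acknowledge), but citing the Atkin--Li identity is the cleaner route since it already handles the normalization ambiguity.

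Two small points worth flagging. First, the reduction ``it suffices to show $W_p^2$ acts by an element of $\Q(\zeta_{\ord\chi})$; a square root then lies in $\Q(\zeta_{2\ord\chi})$'' is imprecise as stated: a square root of a general element of $\Q(\zeta_n)$ need not lie in $\Q(\zeta_{2n})$ (e.g.\ $2\in\Q$ but $\sqrt2\notin\Q(\zeta_2)$). What you actually need, and do eventually establish, is that $\lambda_p(f)^2$ is a \emph{root of unity of order dividing} $\ord\chi$; then its square root is a root of unity of order dividing $2\ord\chi$ and so lies in $\Q(\zeta_{2\ord\chi})$. Second, the ``alternative'' at the end is not correct: the Atkin--Li explicit formula is $\lambda_p(f)=-p^{k/2-1}/a_f(p)$ (the paper's Lemma~4), which by itself says nothing about $\lambda_p(f)$ lying in a cyclotomic field. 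In fact $\lambda_p(f)$ is a square root of $\overline\chi_{N/p}(p)$ and in general genuinely lives in $\Q(\zeta_{2\ord\chi})\setminus\Q(\zeta_{\ord\chi})$ (e.g.\ if $\overline\chi_{N/p}(p)=-1$ and $\ord\chi=2$, then $\lambda_p(f)=\pm i$). The factor-of-two cost in the statement is real, and indeed the paper pays for it by the explicit $\tfrac12$ in the bound of Theorem~3.
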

\begin{proof}
  From the identity $W_{p}^2=\chi_{p}(-1)\overline\chi_{N/p}(p)\id$ (\cite[Proposition 1.1]{AtkinLi78}), we get that
  \begin{equation}
    \label{eq:lambdafg}
    \lambda_{p}(f)\lambda_{p}(g)=\chi_{p}(-1)\overline\chi_{N/p}(p)=\pm\overline\chi_{N/p}(p).
  \end{equation}
  Since $p \mid  N_2$ we have $\chi_{p} = 1$, so that $g\in S_k(\Gamma_0(N),\chi)$, and $a_g(\ell)=a_f(\ell)$ for all prime $\ell \neq p$, by \eqref{eq:agp}. By strong multiplicity one, we get $g=f$. By \eqref{eq:lambdafg}, we obtain $\lambda_{p}(f)^2=\overline\chi_{N/p}(p)$ and thus the claim. 
\end{proof}
% \subsection{Consequence on Hecke fields}
The next ingredient is the explicit determination of $\lambda_f(p)$ in terms of $a_f(p)$ by Atkin and Li.
\begin{lemma}\label{lemma:explicitEV}
  Let $p \mid  N_2$. Then $a_f(p) \neq 0$
 and
  \[\lambda_p(f)=-\frac{p^{k/2-1}}{a_f(p)}.\]
\end{lemma}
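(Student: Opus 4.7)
The plan is to obtain this formula as a direct specialization of the explicit pseudo-eigenvalue computations of Atkin and Li in \cite{AtkinLi78}. Since $N$ is square-free and $p\mid N_2\mid N$, we have $p\| N$; moreover $\chi_p=1$ by the definition of $N_2$, so $p$ is coprime to the conductor of $\chi$. These are precisely the hypotheses under which Atkin--Li explicitly compute the pseudo-eigenvalue. Concretely, writing $W_p$ as a matrix $\begin{pmatrix} p\alpha & \beta \\ N\gamma & p\delta\end{pmatrix}\in\SL_2(\Z)\cdot\operatorname{diag}(1,p)$ in a suitable decomposition, one expands $W_pf$ on the $q$-expansion and matches it against $\lambda_p(f)g$. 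Using that $T_p=U_p$ for $p\mid N$, so that $U_pf=a_f(p)f$, together with $g=f$ (established in the proof of Lemma~\ref{lemma:fieldsDefinition}), the computation of the first Fourier coefficient of $W_pf$ produces the identity
\[
\lambda_p(f)\,a_f(p)\;=\;-p^{k/2-1}.
\]

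With this identity in hand, the non-vanishing $a_f(p)\neq 0$ is automatic: from Lemma~\ref{lemma:fieldsDefinition} (more precisely from \eqref{eq:lambdafg} together with $g=f$), $\lambda_p(f)^2=\overline{\chi}_{N/p}(p)$ is a nonzero root of unity, hence $\lambda_p(f)\neq 0$. The right-hand side $-p^{k/2-1}$ is also nonzero, so $a_f(p)\neq 0$. Dividing both sides by $a_f(p)$ produces the claimed expression $\lambda_p(f)=-p^{k/2-1}/a_f(p)$.

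The main obstacle, modulo invoking \cite{AtkinLi78}, is verifying that the setting of the lemma matches the case of the Atkin--Li computation that yields the clean formula above: this reduces to the two elementary checks $p\|N$ (immediate from square-freeness of $N$) and $\chi_p=1$ (immediate from $p\mid N_2$). Everything else, including the matrix manipulation linking $W_p$ and $U_p$ on the $q$-expansion, is done in \cite{AtkinLi78} and need not be reproduced. I would simply cite the appropriate result there (e.g.\ \cite[Theorem 2.1]{AtkinLi78}, together with the subsequent explicit evaluation for $Q=p\|N$ with trivial $\chi_p$) and deduce the lemma in the two short steps above.
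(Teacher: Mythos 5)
Your approach is essentially correct and matches the paper's in its reliance on Atkin--Li \cite[Theorem 2.1]{AtkinLi78} for the formula, but you diverge on the non-vanishing of $a_f(p)$. The paper simply cites Li \cite[Theorem 3(ii)]{Li74} for $a_f(p)\neq 0$; you instead propose to deduce it from the multiplicative form $\lambda_p(f)\,a_f(p)=-p^{k/2-1}$ together with the observation (valid, and already available from the proof of Lemma~\ref{lemma:fieldsDefinition}) that $\lambda_p(f)^2=\overline\chi_{N/p}(p)$ is a nonzero root of unity, hence $\lambda_p(f)\neq 0$. That deduction is sound and makes the lemma slightly more self-contained, though in practice it buys little since Atkin--Li's Theorem~2.1 already has $a_f(p)\neq 0$ as part of its conclusion (and their proof in turn invokes Li's Theorem~3, so the dependency is not actually removed, only hidden behind a single citation).

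One thing to tighten: you need the relation $\lambda_p(f)\,a_f(p)=-p^{k/2-1}$ to be available in that \emph{product} form, not merely in the division form $\lambda_p(f)=-p^{k/2-1}/a_f(p)$, or else your deduction of $a_f(p)\neq 0$ is circular. Your sketch of how to get it (``the computation of the first Fourier coefficient of $W_pf$'') is vaguer than what actually happens in Atkin--Li: the identity comes from an operator relation at level $N$ versus $N/p$ (essentially the vanishing of the trace of a newform down to level $N/p$, expressed via $U_p$ and $W_p$), applied to $f$, using $f\mid U_p = a_f(p)f$ and $f\mid W_p=\lambda_p(f)f$ (the latter because $g=f$ when $\chi_p=1$). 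If you want to present the non-vanishing as a genuine corollary rather than a restatement, you should either spell out that operator identity, or explicitly note that the proof of \cite[Theorem 2.1]{AtkinLi78} establishes the product relation before dividing. As written, the cleanest course is exactly what the paper does: cite both Li and Atkin--Li, one line each.
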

\begin{proof}
  The fact that $a_{f}(p)\neq 0$ is \cite[Theorem 3(ii)]{Li74}, and the formula for the eigenvalue is \cite[Theorem 2.1]{AtkinLi78}.
\end{proof}

\begin{proof}[Proof of Theorem \ref{thm3}]
  By Lemmas \ref{lemma:strongMult}, \ref{lemma:fieldsDefinition} and \ref{lemma:explicitEV}, we get
\begin{eqnarray*}
  \{p^{k/2} : p\mid N_2\}&\subset& K_f(\zeta_{2\ord(\chi)}).
\end{eqnarray*}
Since $L:=\Q(\zeta_{\ord(\chi)}) \subset K_f$, we have
\begin{eqnarray*}
  [K_f:\Q]&\ge& \frac{1}{2}\cdot [K_f(\zeta_{2 \ord(\chi)}): \Q]\\
  &=&\frac{1}{2}\cdot [K_f(\zeta_{2 \ord(\chi)}):L]\cdot \varphi(\ord(\chi)),
\end{eqnarray*}
where the last factor is the trivial bound.

The square roots of odd primes $p\mid \ord(\chi)$ belong to $L$. On the other hand, for $S := \{ \sqrt{p} : p \mid N_2,\, p \nmid 2 \ord(\chi)\} \subset K_f(\zeta_{2\ord(\chi)})$, we have
\[[K_f(\zeta_{2 \ord(\chi)}):L]\ge[L(S):L]=2^{|S|}\]
by \cite[Theorem 87]{Hil98}, and the claim follows.
\end{proof}
\begin{remark}
 Since the character $\chi_{p}$ is primitive for $p \mid  N_1$, \cite[Theorem 3(ii)]{Li74} and \cite[Theorem 2.1, Proposition 1.4]{AtkinLi78} show that $\lambda_{p}(f)=p^{k/2-1}g(\chi_{p})/a_{f}(p)$, with $g(\chi_{p})$ the Gauss sum attached to $\chi_{p}$. The degree of $p^{k/2-1}g(\chi_{p})$ over $\Q$ can be determined precisely, however we have no information about the field of definition of $\lambda_{p}(f)$, except the fact that it is a root of unity. If we could show that it belongs to a small extension of $K_f$, in the same way as we did for $\lambda_{p}(f)$ with $p \mid  N_2$, then we could add a factor as large as $\ord(\chi)$ to the lower bound of Theorem \ref{thm3}, including when $k$ is even.
\end{remark}
    % \begin{equation*}
    %   \xymatrix{
    %     L(\sqrt{p_1})\ar@{-}[dr]&\dots&L(\sqrt{p_s})\ar@{-}[dl]\\
    %     &L=\Q(\zeta_{2\ord(\chi)})(g(\chi_{N_1}))\ar@{-}[d]&\\
    %     &\Q(\zeta_{2\ord(\chi)})\ar@{-}[d]&\\
    %     &\Q&
    %   }
    % \end{equation*}
\bibliographystyle{alpha}
\bibliography{references3}

\end{document}